\documentclass{amsart}
\usepackage{amssymb}

\usepackage{graphicx}

\usepackage{url}

\newtheorem{theorem}{Theorem}[section]
\newtheorem{lemma}[theorem]{Lemma}
\newtheorem{corollary}[theorem]{Corollary}

\newtheorem{claim}[theorem]{Claim}

\theoremstyle{definition}

\theoremstyle{remark}
\newtheorem{remark}[theorem]{Remark}

\numberwithin{equation}{section}

\begin{document}

\title[L--space knots with arbitrary braid index]{Asymmetric L--space knots with arbitrary braid index}



\author{Keisuke Himeno}
\address{Faculty of Education, Yamaguchi University, Yoshida, Yamaguchi, 753-8511, Japan.}
\email{himekei@yamaguchi-u.ac.jp}
\thanks{The first author has been supported by 
JSPS KAKENHI Grant Number JP26K24490.}

\author{Masakazu Teragaito}
\address{Department of Mathematics Education, Hiroshima University,
1-1-1 Kagamiyama, Higashi-hiroshima, 739-8524, Japan.}
\email{teragai@hiroshima-u.ac.jp}
\thanks{The  second author  has been supported by
JSPS KAKENHI Grant Number JP25K07004. }

\subjclass[2020]{Primary 57K10}

\date{}


\commby{}

\begin{abstract}
The first examples of asymmetric L--space knots were found by Baker and Luecke.
Among Baker--Luecke knots, the simplest one has braid index $12$.
Later, it turned out that there are just  $9$ asymmetric hyperbolic L--space knots
in the SnapPy census, and their braid indices take the values $4$, $5$, $6$ and $7$.
It is known that asymmetric L--space knots have braid index at least $4$.
Recently, Baker and the second author give infinitely many asymmetric hyperbolic L--space knots
with braid index $4$.

In this paper, 
we construct an asymmetric hyperbolic L--space knot with arbitrary braid index bigger than $4$.
In fact, there exist infinitely many such knots for each braid index.
\end{abstract}

\maketitle

\section{Introduction}\label{sec:intro}

A knot  in the $3$-sphere $S^3$ is called an \textit{L--space knot\/} if it admits
a positive Dehn surgery yielding an L--space.
Typical examples are (positive) torus knots, and there are many known hyperbolic or satellite
L--space knots.
Initially, it was thought that any L--space knot admits symmetry, but
Baker and Luecke \cite{BL} gave the first examples of asymmetric L--space knots. 
Later, it was confirmed that there are only $9$ asymmetric hyperbolic L--space knots in the SnapPy census
\cite{ABG}.
Here, a knot is said to be \textit{asymmetric\/} if the symmetry group of the complement is trivial.

Except Baker--Luecke knots, the mechanism by which an asymmetric L--space knot is formed
has not yet been clarified.
In this paper, we investigate asymmetric L--space knots from the view point of braid index.
It is known that such knots have braid index at least $4$ (see \cite{ABG}).
Among the asymmetric census knots, \texttt{t12533} is the only knot with braid index $4$, and
this is generalized to an infinite family by \cite{BT}.
Also, only the values $4,5,6,7$ are realized by the asymmetric census knots,
and the simplest Baker--Luecke knot has braid index $12$.

The purpose of this paper is to show that any braid index bigger than $4$ is realized
by infinitely many asymmetric hyperbolic L--space knots.
For integers $m\ge 1$ and $n\ge 0$, let
the knot $K_{m,n}$ be the closure of a positive $(m+2)$-braid
\begin{equation}\label{eq:braid}
[ (1,2,\dots, m+1)^{(m+2)n+2}, (m+1,m,\dots,1), 1,2,1,1],
\end{equation}
where an integer $i$ means the standard braid generator $\sigma_i$ in the braid group of $m+2$ strands,
and a power indicates a repetition.
Note that $(1,2,\dots, m+1)^{m+2}$ corresponds to a positive one full twist on $m+2$ strands,
so the braid (\ref{eq:braid}) contains positive $n$ full twists.

For simplicity, we write $K_{m,0}$ as $K_m$.
Then $K_1$ is the torus knot $T(3,5)$, and $K_2$ is the $(-2,3,7)$-pretzel knot.
It turns out that $K_m$ is a strongly invertible L--space knot  for any $m\ge 1$ (see Section \ref{sec:Km}).
Also, $K_{1,1}$ is the torus knot $T(3,8)$, and $K_{2,1}$ is 
\texttt{m239} in the census, which is a strongly invertible L--space knot.
However, $K_{3,1}$ is \texttt{o9\_42675}, which is one of the $9$ asymmetric census L--space knots \cite{ABG}.

Note that $K_{m,n}$ is twist positive if $n\ge 1$  in the words of \cite{KM}.
That is, it is the closure of a positive $(m+2)$-braid with at least one positive full twist.
Then its braid index is $m+2$ by \cite{FW,Mor}.

\begin{theorem}\label{thm:main}
If $m\ge 3$ and $n\ge 1$, then
$K_{m,n}$ is an asymmetric, hyperbolic L--space knot whose braid index is $m+2$.
\end{theorem}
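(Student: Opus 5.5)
The braid index assertion is already settled: for $n\ge 1$ the braid (\ref{eq:braid}) is a positive $(m+2)$-braid containing a positive full twist, so $K_{m,n}$ is twist positive and its braid index is $m+2$ by \cite{FW,Mor}. Three properties remain: being an L--space knot, hyperbolicity, and asymmetry. My plan treats them through a single surgery description of the family.

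\textbf{L--space surgeries.} I will realize $K_{m,n}$ as the image of $K_m=K_{m,0}$ under $-1/n$ surgery on an unknotted circle $c$ that encircles all $m+2$ strands. Such a surgery inserts $n$ positive full twists, which is exactly the factor $(1,2,\dots,m+1)^{(m+2)n}$ in (\ref{eq:braid}). Section~\ref{sec:Km} shows that $K_m$ is a strongly invertible L--space knot. The next step is to exhibit $K_m$ as a band sum, or as a knot in a genus-two Heegaard surface. From that description I want an L--space surgery slope $p$ on $K_m$ for which the dual knot is a simple knot, or at least a Floer simple knot, in the resulting lens space or L--space, with $c$ becoming a suitable core.

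The cleanest route I see is to place $K_m \cup c$ so that $p$-surgery on $K_m$ turns $c$ into a knot whose complement is Floer simple. Then $(p+ (m+2)^2 n)$-surgery on $K_{m,n}$ is obtained by a Dehn filling along $c$ of an L--space manifold. Applying the surgery formula for Floer simple knots (the L--space interval criterion of Rasmussen--Rasmussen), I would show that every such filling is an L--space. Alternatively, I can verify directly that the Alexander polynomial has L--space form and then use an inductive surgery exact triangle in $n$. In that triangle, $-1/n$ and $-1/(n+1)$ surgery on $c$ are related, with $|H_1|$ additive, and the base case $n=0$ comes from Section~\ref{sec:Km}.

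\textbf{Hyperbolicity.} By positivity, $K_{m,n}$ is a fibered positive braid closure and is neither a torus knot nor a cable, since its genus and Alexander polynomial do not match those of any torus knot. To rule out satellites I would argue that $K_m\cup c$ is a hyperbolic link, checked via SnapPy for small $m$ and an angled-structure or essential-surface argument in general. Thurston's hyperbolic Dehn surgery theorem then gives hyperbolicity for all but finitely many $n$. For the remaining cases I would use the fact that satellite L--space knots are cables of L--space knots (Hom, Baker--Motegi), with the companion braid index dividing $m+2$, and exclude these using the Alexander polynomial.

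\textbf{Asymmetry, the main obstacle.} The tool is that, for large $n$, the symmetry group of the filled manifold $S^3 - K_{m,n}$ injects into the symmetries of $S^3-(K_m\cup c)$ preserving $c$, because the core of the filling is the unique shortest geodesic. So it suffices to show that the link $K_m\cup c$ has no symmetry preserving each component. The strong inversion of $K_m$ must fail to extend over $c$. For general $m$, however, I do not have explicit geometry, and uniform control in $m$ and $n$ is hard. I would first try a combinatorial invariant: the Alexander polynomial rules out free periods and cyclic periods via Murasugi/Edmonds conditions. A strong inversion would force $K_{m,n}$ to have a symmetric $(1,1)$-type or a Sakuma condition, and I would rule it out using the fibered structure and the monodromy, or using twisted Alexander polynomials. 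Handling all $m\ge 3$ and all $n\ge 1$ uniformly, rather than only large $n$, is where I expect the real difficulty to lie.
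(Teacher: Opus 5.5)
Your braid-index argument is the paper's, but the other two parts each have a genuine gap.

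For the L--space property, your exact-triangle induction along the braid axis is the right mechanism. It is essentially what lies behind the Baker--Motegi criterion, in the form of Baker--Teragaito's Lemma~2.1, which the paper uses. However, the triangle relating the slopes $-\frac{1}{n}$ and $-\frac{1}{n+1}$ on $c$ in $K_m(p)$ has a third vertex: $(K_m\cup c)(p,0)$, with $|H_1|=(m+2)^2$. The induction closes only if that manifold is an L--space. Likewise, Floer simplicity of the exterior of $c$ in $K_m(p)$ needs a second L--space filling besides $K_m(p)$, and the natural candidate is this same manifold. You never identify it. Checking that the Alexander polynomial has L--space form is only a necessary condition. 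This step is where the real work lies. In Lemma~\ref{lem:KA} the paper takes $p=2m+15\ge 2g(K_m)-1$ and slides $K_m$ over the $0$-framed axis, so the coefficient becomes $2m+15-2(m+2)=11$. It then applies the Montesinos trick and shows the resulting link $D(1-m)$ is quasi-alternating. This is done by resolving crossings in the twist box, using $\det D(k)=\det D(k+1)+\det 6_2$, down to the pretzel knot $P(m-4,3-m,3)$, which is quasi-alternating by Champanerkar--Kofman.

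For hyperbolicity and asymmetry you correctly name the obstacle but do not overcome it. Thurston's theorem and the \emph{core is the shortest geodesic} argument are non-effective. For each fixed $m$ they give only an unknown threshold $n(m)$. They also presuppose that $S^3-(K_m\cup c)$ is hyperbolic with no symmetry preserving $c$, which you cannot check for infinitely many $m$. The leftover pairs with $n<n(m)$ remain uncontrolled.

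Your fallbacks do not close this. \emph{Satellite L--space knots are cables} is not a theorem: satellites with Berge--Gabai patterns give non-cable satellite L--space knots (Hom--Lidman--Vafaee). Period obstructions, Sakuma-type conditions, or twisted Alexander polynomials at best exclude particular kinds of symmetry, not every self-homeomorphism of the complement.

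The missing idea is to make $m$ a Dehn filling parameter too. The paper exhibits a link $K\cup C_0\cup C_1\cup C_2\cup C_3$ whose fillings $\frac{1}{m},-\frac{1}{m},-\frac{1}{n-1},-1$ give $K_{m,n}$; the $\mp m$ twists along $C_0$ and $C_1$ cancel in their effect on the other framings. So every exterior of $K_{m,n}$ is a filling of one fixed manifold $M$, the exterior of \texttt{L14n61770}, which is hyperbolic and asymmetric with systole $>1$.

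Futer--Purcell--Schleimer's effective theorem (Theorem 7.28, with $\mathrm{sysmin}(L_0)<2\pi/(L_0^2-58)$) then applies. Once the total normalized length of the slopes is at least $L_0\ge 10.1$, the filling is hyperbolic and the cores are the shortest geodesics. Hence every symmetry restricts to $M$ and is trivial. Verified cusp shapes give this for $m\ge 20$, $n\ge 21$. The strips $3\le m\le 19$ and $1\le n\le 20$ are handled by the same theorem applied to the partial fillings $N_m$ and $J_n$. (The only symmetry of $J_1$ swaps a filled cusp with the cusp of $K$, so it cannot arise.) The finitely many leftover knots are checked in SnapPy. This yields hyperbolicity and asymmetry simultaneously and uniformly, with no satellite classification needed.
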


\begin{corollary}
For any integer $b\ge 5$, there exist infinitely many asymmetric hyperbolic L--space knots with braid index $b$ and bridge index $b$.
\end{corollary}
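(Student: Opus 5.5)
The statement to prove is the Corollary, which I would deduce from Theorem~\ref{thm:main}. Given $b\ge 5$, set $m=b-2\ge 3$. By Theorem~\ref{thm:main}, each $K_{m,n}$ with $n\ge 1$ is an asymmetric hyperbolic L--space knot of braid index $m+2=b$. Two things then remain to be checked: that the bridge index of $K_{m,n}$ also equals $b$, and that infinitely many of the $K_{m,n}$ are pairwise distinct.

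\textbf{Bridge index.} The upper bound is immediate, since bridge index is at most braid index, so it is at most $m+2$. For the lower bound I would use that $K_{m,n}$ is twist positive when $n\ge1$. The plan is to invoke the known result that a twist positive knot (the closure of a positive braid containing a full twist) has bridge index equal to its braid index. If I cannot cite this directly, I would argue as follows. The braid (\ref{eq:braid}) contains at least one full twist $\Delta^2$ on $m+2$ strands, so $K_{m,n}$ is a cable-like twisted closure in which $T(m+2,\cdot)$-type full twisting dominates. Bridge-index lower bounds for twisted torus knots and for positive braids with full twists (e.g.\ via the Kirby--Melvin type results quoted as \cite{KM}, or via bridge spectra arguments) then give bridge index $\ge m+2$. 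I expect this citation to be the only real obstacle. The fallback is to locate the specific theorem showing that braid index and bridge index agree for twist positive knots.

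\textbf{Infinitely many knots.} I would distinguish the $K_{m,n}$ by their genus. Each $K_{m,n}$ is the closure of a positive braid, so its Seifert genus is
\[ g=\frac{c-(m+2)+1}{2}, \]
where $c$ is the word length. Here
\[ c=(m+1)\bigl((m+2)n+2\bigr)+(m+1)+4, \]
which is strictly increasing in $n$. Hence the knots $K_{m,n}$, $n\ge1$, have distinct genera and are pairwise distinct.

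Since $m=b-2$ realizes every $b\ge5$, this completes the proof of the Corollary.
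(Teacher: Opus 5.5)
Your proposal is correct and follows the paper's argument: take $m=b-2$, apply Theorem~\ref{thm:main}, get bridge index equal to braid index from twist positivity, and distinguish the knots by genus, where your word-length count gives $g(K_{m,n})=(m^2+3m+2)n/2+m+3$, exactly as in the paper. The result you need for the bridge index is precisely \cite[Theorem 1.3]{KM}; that reference is Krishna--Morton, not Kirby--Melvin, and the paper cites it directly, so your vague fallback argument is unnecessary.
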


\begin{proof}
For a given $b\ge 5$, set $m=b-2$.
By Theorem \ref{thm:main}, $K_{m,n}$ is an asymmetric hyperbolic L--space knot with braid index $b$.
Since $K_{m,n}$ is twist positive, the braid index and bridge index agree \cite[Theorem 1.3]{KM}
(see also \cite{H}).

Since $K_{m,n}$ is the closure of a positive braid, it is fibered \cite{S}, and its genus is calculated from
the braid presentation (\ref{eq:braid}).
Then $K_{m,n}$ has genus $(m^2+3m+2)n/2+m+3$, so $K_{m,n}\ne K_{m,n'}$ if $n\ne n'$.
\end{proof}

The knots in \cite{BT} are twist positive, so they realize bridge index $4$.
It seems to be unknown whether there exists an asymmetric L--space knot with bridge index $3$.
In fact, Krishna and Morton \cite[Conjecture 1.7]{KM} conjecture that for an L--space knot, the bridge index and braid index agree,
even if the knot is not assumed to be twist positve.
Hence it is plausible that there would not exist an asymmetric L--space knot with bridge index $3$.

Here is the organization of the paper.
In Section \ref{sec:Km}, we prove that $K_m$ is an L--space knot for $m\ge 1$.
Although this case is not included in Theorem \ref{thm:main},
we need it  to establish the fact that $K_{m,n}$ is an L--space knot for $n\ge 1$ in Section \ref{sec:L}.
Finally, we show that $K_{m,n}$ is asymmetric and hyperbolic when $m\ge 3$ and $n\ge 1$ in Section \ref{sec:HA}
by using the result of Futer--Purcell-Schleimer \cite{FPS} and calculations by SnapPy \cite{CD}.

\section{The case where $n=0$.} \label{sec:Km}

In this section, we prove that $K_m$ is an L--space knot for $m\ge 1$.
As usual, $r$-surgery on a knot $k$ is denoted by $k(r)$, and
$(k_1\cup k_2)(r_1,r_2)$ denotes $(r_1,r_2)$-surgery on a link $k_1\cup k_2$.

\begin{theorem}\label{thm:Km}
For $m\ge 1$, $K_m(4m+11)$ is an L--space.
Hence, $K_m$ is an L--space knot.
\end{theorem}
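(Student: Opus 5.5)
Our plan is an induction on $m$ driven by the surgery exact triangle. The base cases are already available: $K_1=T(3,5)$ and $K_2$ is the $(-2,3,7)$-pretzel knot, both known L--space knots. We check directly that $K_1(15)$ is an L--space ($15\ge 2g(T(3,5))-1=7$, and in fact it is the lens space $L(15,4)$), and that $K_2(19)$ is an L--space ($19\ge 2\cdot 5-1$). For the inductive step we need an unknot $c$ in the complement of $K_m$ such that a suitable surgery on $c$ turns $K_m$ into $K_{m+1}$. Since the braid (\ref{eq:braid}) with $n=0$ is $[(1,\dots,m+1)^2,(m+1,\dots,1),1,2,1,1]$, we realize the passage from $m+2$ to $m+3$ strands by a stabilization-type move: add a new strand and insert the letters $m+2$ into the two blocks $(1,\dots,m+1)$ and the block $(m+1,\dots,1)$. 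We encode this as a $-1$-twist along an unknotted circle $c$ that encircles a few strands near the right end of the braid. The first concrete task is to draw $K_m\cup c$ and identify the right $c$. A natural candidate is a circle enclosing the last strand together with a band.

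Given such a link $L=K_m\cup c$ with $c$ unknotted, we set up the surgery triangle on $c$ with $K_m$ carrying the fixed slope $4m+11$ (adjusted by the linking number under twisting). The three manifolds in the triangle are $L(4m+11,\infty)=K_m(4m+11)$, $L(4m+11,-1)$, which should equal $K_{m+1}(4(m+1)+11)$ after the framing change $4m+11+\mathrm{lk}(K_m,c)^2$ (this forces $\mathrm{lk}=\pm2$), and $L(4m+11,0)$. The plan is to show that the third manifold is an L--space too, and then apply the standard fact: if two members of a triangle are L--spaces and $|H_1|$ is additive, then the third is an L--space. Here the orders are $|H_1|=4m+11$, $4m+15$ and $4$. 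The arithmetic matches precisely when $\mathrm{lk}=\pm2$, because the determinant of the linking matrix for slope $0$ is $-\mathrm{lk}^2=-4$. So we need $|H_1(L(4m+11,0))|=4$ and that manifold must be an L--space.

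The main obstacle is identifying $L(4m+11,0)$. We would compute it by Kirby calculus. After $0$-surgery on $c$, slide $K_m$ over $c$ (or blow down) to simplify, hoping to land on a small Seifert fibered space or a lens space of order $4$, such as $L(4,1)$ or a prism manifold with $|H_1|=4$. If the result depends on $m$, we would need it to be a family of Seifert fibered spaces whose L--space property can be checked uniformly, for example by verifying that they admit no horizontal foliation, or by recognizing them as surgeries on torus knots with large slope. If the direct choice of $c$ fails, we would try instead a two-component link in which both $K_m$ and $K_{m+1}$ appear as surgeries from a common strongly invertible tangle, using the Montesinos trick, and compute the double branched covers as alternating-type links whose determinants match. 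Once the base cases $m=1,2$ and this triangle step are in place, induction gives that $K_m(4m+11)$ is an L--space for every $m\ge 1$. Since $4m+11>0$, $K_m$ is then an L--space knot.
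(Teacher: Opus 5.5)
This is a genuine gap, not a proof. Your triangle bookkeeping is consistent: with $\mathrm{lk}(K_m,c)=\pm2$, the orders $4m+11$, $4$ and $4m+15$ do add correctly for the triad of slopes $\infty,0,-1$ on $c$. But the two inputs that carry all the content are missing.

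(i) You never exhibit $c$. Your move (add a strand and insert $\sigma_{m+2}$ into three separate blocks) is not shown to be realized by a $-1$-twist along any unknot. Such a twist only inserts a full twist on the strands crossing a spanning disk of $c$. Also, ``a circle enclosing the last strand together with a band'' is not a specified curve. Without an explicit $c$ carrying $K_m$ to $K_{m+1}$, there is no triangle.

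(ii) Even granting $c$, you never identify $(K_m\cup c)(4m+11,0)$ or prove it is an L--space. You flag this yourself as the main obstacle, and the exact triangle gives nothing until that vertex is known.

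A smaller error: $K_1(15)$ is not $L(15,4)$. Since $15$ is the reducible slope of $T(3,5)$, $K_1(15)\cong L(3,\cdot)\#L(5,\cdot)$. This is harmless, because $15\ge 2g(T(3,5))-1$ already gives the base case.

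The paper does no induction. It places a link $K\cup c$ with $\mathrm{lk}=3$ in strongly invertible position, so that $(K\cup c)(4m+2,-1)=K_m(4m+11)$. The Montesinos trick then identifies $K_m(4m+11)$ directly as the double branched cover of $M(-\frac{1}{m-1},\frac25,-\frac23)$; for $m=2$ this is $7_6$, and for $m=1$ a connected sum. The L--space property of this Seifert fibered space is then read off from the Lisca--Stipsicz and Lisca--Mati\'c criteria.

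Your strategy can be completed, but only on top of an identification of this kind. Two remarks would help.

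First, suppose $K_{m+1}\cup c$ is the image of $K_m\cup c$ under the twist. The twist fixes the longitude of $c$, so $(K_m\cup c)(4m+11,0)\cong(K_1\cup c)(15,0)$ for every $m$. Your worry that the third manifold depends on $m$ is therefore moot: only one manifold needs checking.

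Second, the natural downstairs version of your triangle is resolving a crossing in the $-\frac1{m-1}$ tangle of $M(-\frac1{m-1},\frac25,-\frac23)$. This yields $M(-\frac1{m-2},\frac25,-\frac23)$ and the two-bridge link $M(0,\frac25,-\frac23)$, with determinants $4m+7$ and $4$, summing to $4m+11$. The latter is a nonsplit alternating link, and the base case $7_6$ is alternating. Induction therefore shows these Montesinos links are quasi-alternating, which gives the theorem without the contact-topological criterion. The Montesinos-trick computation that your proposal omits remains indispensable either way.
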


\begin{proof}
We use the link $K\cup c$ as shown in Figure \ref{fig:K+c}.
After performing $(-1)$-surgery on $c$, $K$ is changed into $K_m$.
The link is deformed into a strongly invertible position as shown there.
The box labeled with $2m+1$ contains $2m+1$ vertical right handed half twists.

\begin{figure}[ht]
\includegraphics*[width=0.8\textwidth]{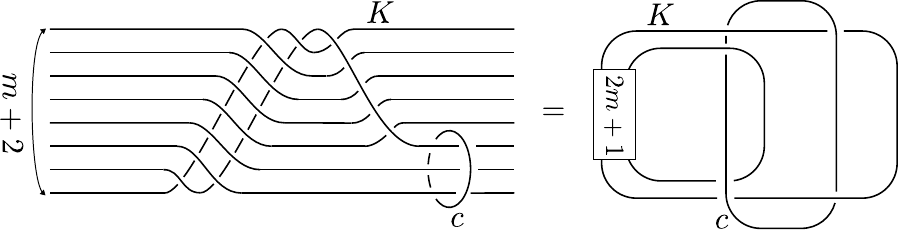}
\caption{Left: The link $K\cup c$, where $K$ is the closure of the $(m+2)$-braid.
By performing $(-1)$-surgery on $c$,
$K$ is changed into the knot $K_m$.
(We draw the case where $m=6$.)
Right: It is deformed into a stronly invertible position.
}
\label{fig:K+c}
\end{figure}

Consider the surgery diagram $(K\cup c)(4m+2,-1)$, which represents $K_m(4m+11)$.
Since the link is strongly invertible, we can apply the Montesinos trick \cite{Mo}.
After taking the quotient under the involution around the axis depicted
by the dotted line in Figure \ref{fig:K+c-mont}, the tangle replacements are performed.
The box labeled with $m$ contains $m$ horizontal right handed half twists, and
the box with $m-1$ contains $m-1$ vertical right handed half twists.
As shown there, the resulting knot is the Montesinos knot $M(-\frac{1}{m-1}, \frac{2}{5},-\frac{2}{3})$ if $m\ge 3$,
the 2-bridge knot $7_6$ if $m=2$,
or the connected sum of the trefoil and the figure eight knot if $m=1$.

\begin{figure}[ht]
\includegraphics*[width=0.8\textwidth]{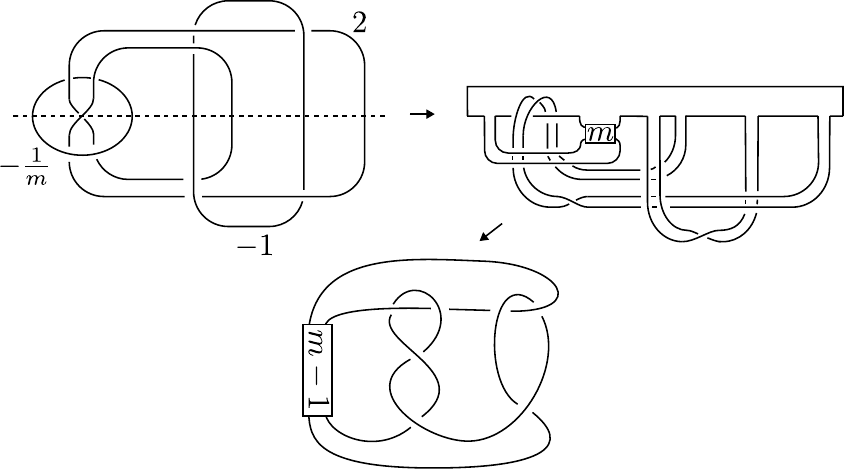}
\caption{
After taking the quotient under the involution around the axis,
perform the tangle replacements.  The result is the Montesinos knot $M(-\frac{1}{m-1}, \frac{2}{5},-\frac{2}{3})$ if $m\ge 3$,
the $2$-bridge knot $7_6$ if $m=2$, or the connected sum of the trefoil and the figure eight knot if $m=1$.
}
\label{fig:K+c-mont}
\end{figure}

Recall that $K_m(4m+11)$ is the double branched cover of this resulting knot.
If $m=1$, then it is the connected sum of two lens spaces, so we are done, because a lens space is an L--space and so is their connected sum \cite{OS2}.
If $m=2$, then it is a lens space.
Assume $m\ge 3$.
Then
 $K_m(4m+11)$ is the double branched cover of this Montesinos link, which 
is the Seifert fibered manifold $M(0;-\frac{1}{m-1}, \frac{2}{5},-\frac{2}{3})$.
Here we use the notation in \cite{LS} (see also \cite{BK}).
The Seifert fibered manifold $M(e_0; r_1,r_2,r_3)$ is obtained by $e_0$-surgery on the unknot with
$3$ meridians having the coefficient $-1/r_i$ on the $i$th one.

We can conclude that our Seifert fibered manifold $M(0;-\frac{1}{m-1}, \frac{2}{5},-\frac{2}{3})$ is an L--space by using
\cite[Theorem 1]{LS}.
It claims that $M=M(e_0; r_1,r_2,r_3)$ (with $1\ge r_1\ge r_2\ge r_3>0$)  is an L--space if and only if either  $M$ or $-M$
carries no positive transeverse contact structure.
Then \cite[Theorem 1.3]{LM} shows that it is equivalent to the condition that $e_0\ge 0$, or $e_0=-1$ and
there are no coprime integers $\ell>a>0$ such that $\ell r_1<a<\ell(1-r_2)$ and $\ell r_3<1$.

Since 
$-M(0;-\frac{1}{m-1}, \frac{2}{5},-\frac{2}{3})$ is homeomorphic to $M(-1; \frac{1}{m-1}, \frac{3}{5}, \frac{2}{3})$,
we can set $e_0=-1$, $r_1=2/3$, $r_2=3/5$ and $r_3=1/(m-1)$.
Obviously, the inequality $\ell r_1<\ell (1-r_2)$ is impossible.
\end{proof}

\begin{remark}
Since the link $K\cup c$ is strongly invertible, we can conclude that $K_m$ is strongly invertible for any $m\ge 1$.
In fact, it is easy to make a strongly invertible diagram of $K_m$ by twisting along $c$.
\end{remark}

\section{The case where $n\ge 1$.} \label{sec:L}

Based on Theorem \ref{thm:Km}, we prove that $K_{m,n}$ is also an L--space knot, in general.
We use the link $K_m\cup A$ as shown in Figure \ref{fig:K+A}.
After $(-1/n)$-surgery on $A$, $K_m$ is changed into $K_{m,n}$.

\begin{figure}[ht]
\includegraphics*[width=0.5\textwidth]{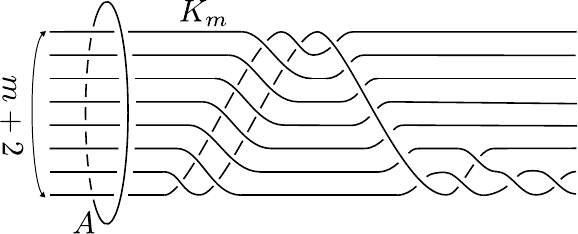}
\caption{
The link $K_m\cup A$.
After $(-1/n)$-surgery on $A$, $K_m$ is changed into $K_{m,n}$.
}
\label{fig:K+A}
\end{figure}

Let us briefly recall the definition of quasi-alternating link \cite{OS1}.
The set $\mathcal{Q}$ of quasi-alternating links is the smallest set of links which satisfies the following.
\begin{itemize}
\item[(1)]
The unknot belongs to $\mathcal{Q}$.
\item[(2)]
If a link $L$ has a crossing $c$ in its diagram $D$ such that both resolutions $D_0$ and $D_\infty$ at $c$
give the links $L_0$ and $L_\infty$ in $\mathcal{Q}$ with $\det L=\det L_0+\det L_\infty$ and $\det L_0, \det L_\infty \ne 0$, then $L$ belongs to $\mathcal{Q}$.
\end{itemize}
It is known that alternating knots and non-split alternating links are quasi-alternating \cite[Lemma 3.2]{OS1}, and the double branched cover of $S^3$ branched over a quasi-alternating link
is an L--space \cite[Proposition 3.3]{OS1}.

\begin{lemma}\label{lem:KA}
$(K_m\cup A)(2m+15,0)$ is an L--space.
\end{lemma}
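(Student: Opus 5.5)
The plan is to run the same Montesinos trick as in the proof of Theorem \ref{thm:Km}, now applied to the two-component link $K_m\cup A$ with surgery coefficients $(2m+15,0)$. The conclusion will come from the quasi-alternating criterion of \cite{OS1} rather than from Seifert fibered techniques.

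First, I would isotope $K_m\cup A$ into a strongly invertible position, so that one involution of $S^3$ preserves both components and reverses each. The natural starting point is the strongly invertible diagram of $K_m$ from the Remark following Theorem \ref{thm:Km}, obtained by twisting along $c$ in Figure \ref{fig:K+c}. Since $A$ is a braid axis encircling all $m+2$ strands, it should be possible to place $A$ as a round circle meeting the axis of the involution in two points, with the symmetric diagram of $K_m$ arranged so that $A$ still encircles it. I expect this to be the main obstacle. The diagram of $K_m$ made strongly invertible via $c$ is no longer presented as a closed braid about $A$, so one has to track $A$ through the twisting along $c$ (or equivalently work with the three-component link $K\cup c\cup A$ and twist along $c$ afterwards) and check that the position of $A$ is compatible with the involution.

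Second, I would take the quotient under the involution. This gives a tangle diagram in which each of $K_m$ and $A$ becomes an arc meeting the branch set. I would then perform the rational tangle replacements dictated by the framings. For $A$, the $0$-framing relative to the blackboard framing of the symmetric picture has to be computed. For $K_m$, the coefficient $2m+15$ has to be corrected by the writhe of the symmetric diagram, exactly as $4m+11$ was converted into the coefficient $4m+2$ on $K$ in the proof of Theorem \ref{thm:Km}. By the Montesinos trick \cite{Mo}, $(K_m\cup A)(2m+15,0)$ is then the double branched cover of the resulting link $L$.

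Third, I would simplify the diagram of $L$, using flypes and cancellation of twist boxes similar to Figure \ref{fig:K+c-mont}, until it is recognizably alternating and non-split. A plausible target is a Montesinos or $2$-bridge link with twist regions depending on $m$ and signs arranged alternately. If $L$ is not visibly alternating, I would instead exhibit a crossing and verify the quasi-alternating condition $\det L=\det L_0+\det L_\infty$ with $L_0$ and $L_\infty$ alternating. As a consistency check, $\det L$ should equal $|H_1|$ of the surgered manifold. That value is the absolute value of the determinant of the linking matrix, which is $\lambda^2$ where $\lambda=\mathrm{lk}(K_m,A)=m+2$ because the $0$-framing on $A$ kills the diagonal term. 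Matching $\det L=(m+2)^2$ will confirm the framings were handled correctly. Once $L$ is shown to be non-split alternating, or quasi-alternating, \cite[Lemma 3.2, Proposition 3.3]{OS1} give that its double branched cover, namely $(K_m\cup A)(2m+15,0)$, is an L--space.
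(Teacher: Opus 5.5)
There is a genuine gap, and it is at the step you yourself flagged. For $m\ge 3$ the link $K_m\cup A$ cannot be put in strongly invertible position at all.

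Suppose an involution $\iota$ of $S^3$ preserved $K_m$ and $A$, with its axis meeting each in two points. Since $\iota$ reverses $A$ and preserves the orientation of $S^3$, it acts as $-1$ on $H_1(\partial N(A))$, so it preserves every slope, in particular $-1/n$. Hence $\iota$ extends over the solid torus glued in by $(-1/n)$-surgery on $A$. This gives an involution of $S^3$ that strongly inverts $K_{m,n}$. For $m\ge 3$ and $n\ge 1$ that contradicts the asymmetry of $K_{m,n}$ established in Section \ref{sec:HA}. So no choice of diagram, and no equivariant tracking of $A$ through the twisting along $c$, can make your first step work. Your "it should be possible to place $A$" is exactly what fails.

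The missing idea is to change the link by Kirby calculus before applying the Montesinos trick: slide $K_m$ over the $0$-framed component $A$.
\begin{itemize}
\item This leaves the surgered manifold unchanged and changes the coefficient to $2m+15-2(m+2)=11$.
\item It replaces $K_m$ by a band sum of $K_m$ with a parallel copy of $A$. Twisting along $A$ then produces a different knot rather than $K_{m,n}$, so the obstruction above no longer applies.
\item The paper then draws the new link strongly invertibly, adding two unknotted components to absorb full twists, and takes the quotient to get a link $D(1-m)$.
\end{itemize}

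Your determinant consistency check is correct: $\det D(1-m)=(m+2)^2$. However, your third step is also too hopeful. The quotient link is not exhibited as alternating, and a single resolution with both sides alternating is not what happens. The paper instead resolves a crossing in the rightmost twist box to get $6_2$ (determinant $11$) and $D(2-m)$, with $\det D(1-m)=\det D(2-m)+11$. It iterates this down to $D(0)$, which is the pretzel knot $P(m-4,3-m,3)$ and is quasi-alternating by \cite{CK} for $m\ge 5$; the small values of $m$ are checked directly.
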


\begin{proof}
We use the Kirby calculus (\cite{GS}) and the Montesinos trick.
Perform a handle slide as shown in Figure \ref{fig:slide}.
Then the coefficient on $K_m$ is changed to $2m+15-2(m+2)=11$,
since $K_m$ has linking number $m+2$ with $A$ under suitable orientations.

\begin{figure}[ht]
\includegraphics*[width=0.9\textwidth]{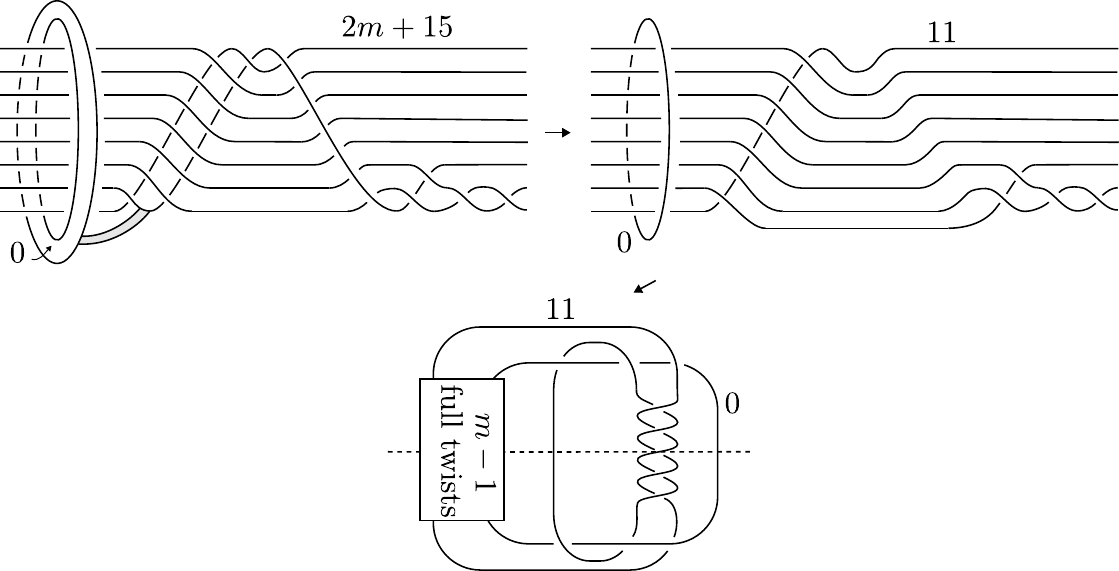}
\caption{
Perform a handle slide.
The result is deformed into a strongly invertible diagram.
}
\label{fig:slide}
\end{figure}

The result is deformed into a strongly invertible diagram, so we can apply the Montesinos trick.
Add two unknotted components to absorb full twists, and take the quotient under the involution
around the axis depicted by the dotted line in Figure \ref{fig:slide2}.
After the tangle replacements, we have a link diagram (Middle bottom in Figure \ref{fig:slide2}),
which is denoted by $D(1-m)$ to record the number of half twists in the rightmost box.
(Here, each box labeled with integer $i$ contains $i$  right handed half twists if $i>0$, or 
$-i$ left handed half twists if $i<0$.)

\begin{figure}[ht]
\includegraphics*[width=0.9\textwidth]{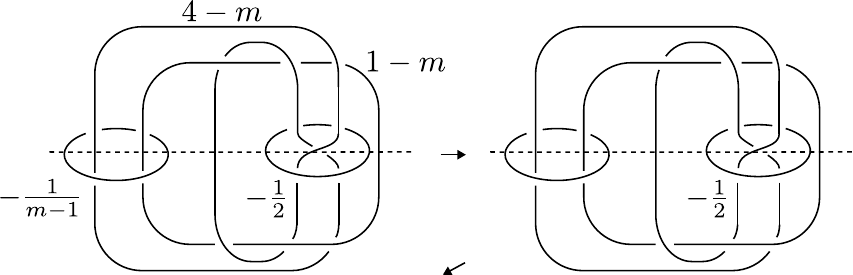}
\caption{
Add two unknotted components to absorb full twists, and take the quotient under the involution
around the axis depicted by the dotted line.
The result is denoted by $D(1-m)$ to record the number of twists in the rightmost box.
}
\label{fig:slide2}
\end{figure}

\begin{claim}\label{cl:QA}
The link $D(1-m)$ is quasi-alternating.
\end{claim}

\begin{proof}[Proof of Claim \ref{cl:QA}]
First, it is straightforward to verify that $D(0)$ is the knot $8_{20}$ and $D(-1)$ is \texttt{L7a3}, which 
is the pretzel link $P(2,2,3)$.  Since these are known to be quasi-alternating, we are done.

Suppose $m\ge 3$.
Note that $\det D(1-m)=(m+2)^2$.
(This can be verified by using a checkerboard coloring of the diagram and its Goeritz matrix.)
Choose any  crossing $c$ in the rightmost box.
By performing two types of resolution at $c$, we obtain the $2$-bridge knot $6_2$ with determinant $11$ and $D(2-m)$
wtih determinant $m^2+4m-7$.
Then we have the identity $\det D(1-m)=\det D(2-m)+\det 6_2$.
Repeating this process, we stop at $D(0)$ with determinant $m^2-7m+15$.

When $m=3$, $D(0)$ is the left handed trefoil.
Also, if $m=4$, $D(0)$ is the right handed trefoil.
Hence we may assume $m\ge 5$.
It is easy to see that $D(0)$ is the pretzel knot $P(m-4,3-m,3)$, and it is quasi-alternating
by \cite[Theorem 3.2]{CK}.
\end{proof}

Since $(K_m\cup A)(2m+15,0)$ is the double branched cover of a quasi-alternating link $D(1-m)$,
it is an L--space.
\end{proof}

\begin{theorem}\label{thm:Kmn}
For $m\ge 1$ and $n\ge 1$,  $K_{m,n}$ is an L--space knot.
\end{theorem}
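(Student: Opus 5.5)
We will view $K_{m,n}$ as the image of $K_m$ after $(-1/n)$-surgery on the unknotted component $A$, and control one surgery slope on $K_{m,n}$ using the surgery exact triangle along $A$. We work in the two-component link $K_m\cup A$ with the coefficient on $K_m$ fixed, and vary the slope on $A$ over $\infty$, $0$, and $-1/n$. The linking number of $K_m$ and $A$ is $m+2$. Hence $-1/n$-surgery on $A$ changes the framing of $K_m$ by $n(m+2)^2$, which matches the full-twist count in the braid. So $(K_m\cup A)(p,-1/n)=K_{m,n}(p+n(m+2)^2)$.

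We choose $p=2m+15$, the value in Lemma \ref{lem:KA}, and make the following identifications.
\begin{itemize}
\item[(1)] The slope $\infty$ on $A$ gives $(K_m\cup A)(2m+15,\infty)=K_m(2m+15)$. Since $K_m$ is an L--space knot by Theorem \ref{thm:Km}, and $2m+15\ge 2g(K_m)-1$, this is an L--space. The genus is $g(K_m)=m+3$ by the genus formula with $n=0$, so $2g(K_m)-1=2m+5<2m+15$.
\item[(2)] The slope $0$ on $A$ gives an L--space by Lemma \ref{lem:KA}.
\end{itemize}

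Next we apply the Ozsv\'ath--Szab\'o surgery exact triangle to the triad of slopes $(\infty,\,-1/(k-1),\,-1/k)$ on $A$, for $k\ge 1$; the case $k=1$ is the triad $(\infty,0,-1)$. These slopes pairwise have distance one, so they form a triad. The standard consequence is this: if two of the manifolds in a triad are L--spaces, and the order of $H_1$ of the third equals the sum of the orders for those two, then the third is an L--space. We induct on $k$, with the pairs $Y_\infty=K_m(2m+15)$ and $Y_{-1/(k-1)}$ as hypotheses and $Y_{-1/k}$ as the conclusion.

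The homology orders are computed from the linking matrix. For coefficients $p$ on $K_m$ and $a/b$ on $A$, the order is $|ap-b(m+2)^2|$.
\begin{itemize}
\item[(1)] At $a/b=1/0$ the order is $p$.
\item[(2)] At $a/b=0/1$ the order is $(m+2)^2$, consistent with $\det D(1-m)$.
\item[(3)] At $a/b=-1/k$ the order is $p+k(m+2)^2$.
\end{itemize}
Thus $|H_1(Y_{-1/k})|=|H_1(Y_\infty)|+|H_1(Y_{-1/(k-1)})|$ holds exactly. Here $Y_{-1/0}$ is read as the $0$ slope when $k=1$, and everything is positive since $p>0$. By induction, $Y_{-1/n}=K_{m,n}(2m+15+n(m+2)^2)$ is an L--space for all $n\ge1$. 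Since the surgery slope is positive, $K_{m,n}$ is an L--space knot.

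The main point to check is that the three slopes really form a triad with the correct cyclic orientation, so that the additive relation appears on the correct vertex of the triangle. One must also check that the sign conventions make the surgery coefficient on $K_{m,n}$ come out as $p+n(m+2)^2$ rather than $p-n(m+2)^2$. Both follow from the fact that $-1/n$-surgery on $A$ adds $n$ positive full twists, which is how $K_{m,n}$ was obtained from $K_m$ in Figure \ref{fig:K+A}. Once the orientation is fixed, the homology bookkeeping above confirms that the L--space condition propagates.
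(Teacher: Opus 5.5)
Your setup is correct: coefficient $p=2m+15$ on $K_m$, the identification $(K_m\cup A)(p,-1/n)=K_{m,n}(p+n(m+2)^2)$, the order formula $|ap-b(m+2)^2|$, and the two inputs $Y_\infty=K_m(2m+15)$ and $Y_0$. Propagating the L--space property along slopes on $A$ by the exact triangle is also the content of the lemma the paper quotes, \cite[Lemma 2.1]{BT}. But your inductive step fails for $k\ge 2$.

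\textbf{The gap.} The distance between $\infty=1/0$ and $-1/k$ is $k$, so $(\infty,-1/(k-1),-1/k)$ is a triad only when $k=1$; for example, $\infty$ and $-1/2$ meet twice. The homology identity you assert also fails by your own formula. You have $|H_1(Y_{-1/k})|=p+k(m+2)^2$, whereas $|H_1(Y_\infty)|+|H_1(Y_{-1/(k-1)})|=2p+(k-1)(m+2)^2$. These differ by $(m+2)^2-p=m^2+2m-11$, which is never $0$ for integer $m$. As written, you have only proved the case $n=1$, which comes from the genuine triad $(\infty,0,-1)$.

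\textbf{The fix.} Swap the roles of $0$ and $\infty$. Since $(-\mu+k\lambda)-(-\mu+(k-1)\lambda)=\lambda$, the slope completing $\{-1/(k-1),-1/k\}$ to a triad is $0$. So the L--space held fixed at every step must be $Y_0=(K_m\cup A)(2m+15,0)$ from Lemma \ref{lem:KA}, and then
\[
|H_1(Y_{-1/k})|=p+k(m+2)^2=(m+2)^2+\bigl(p+(k-1)(m+2)^2\bigr)=|H_1(Y_0)|+|H_1(Y_{-1/(k-1)})|.
\]
The manifold $Y_\infty=K_m(2m+15)$ enters only once, as the base term $Y_{-1/0}$. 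That term literally is the slope $\infty$, so your convention of reading $-1/0$ as $0$ is backwards.

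With this change your argument becomes the standard proof of the cited lemma. Your concern about cyclic orientation is not an issue: the exact triangle bounds the rank of $\widehat{HF}$ at any vertex by the sum at the other two, so only the distance-one condition and the homology count matter.
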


\begin{proof}
By Theorem \ref{thm:Km}, we know that $K_m$ is an L--space knot.
Since $K_m$ has genus $g(K_m)=m+3$, we have an inequality $2m+15 \ge 2g(K_m)-1$.
According to \cite[Lemma 2.1]{BT} (originally from \cite[Theorem 1.13 and Lemma 6.1]{BM}),
Lemma \ref{lem:KA} under these facts implies that $K_{m,n}$ is an L--space knot for any $n\ge 1$.
\end{proof}


\section{Hyperbolicity and asymmetry}\label{sec:HA}

Finally, we prove that $K_{m,n}$ is hyperbolic and asymmetric for $m\ge 3$ and $n\ge 1$
by using SnapPy.
We use the link $L=K\cup C_0\cup C_1 \cup C_2\cup C_3$ as shown in Figure~\ref{figure_asy_link_diagram}.

\begin{figure}[ht]
\centering
\includegraphics[width=0.5\textwidth]{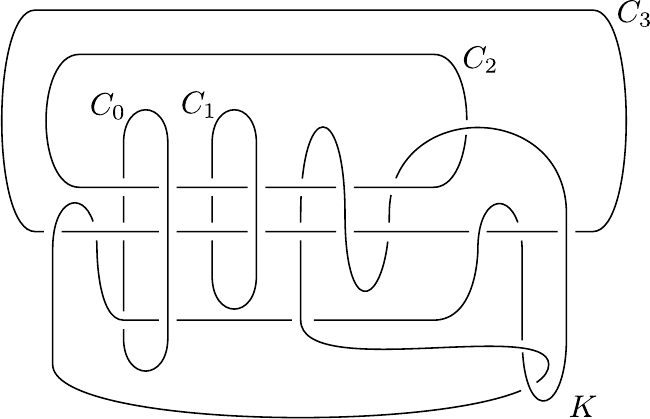}
\caption{The link $L=K\cup C_0\cup C_1 \cup C_2\cup C_3$. By performing $\frac{1}{m}$-surgery on $C_0$, $(-\frac{1}{m})$-surgery on $C_1$, $(-\frac{1}{n-1})$-surgery on $C_2$ and $(-1)$-surgery on $C_3$, $K$ is changed into the knot $K_{m,n}$}
\label{figure_asy_link_diagram}
\end{figure}

\begin{lemma}
Let $m\ge 3$ and $n\ge 1$. By performing $\frac{1}{m}$-surgery on $C_0$, $(-\frac{1}{m})$-surgery on $C_1$, $(-\frac{1}{n-1})$-surgery on $C_2$ and $(-1)$-surgery on $C_3$, $K$ is changed into the knot $K_{m,n}$. 
\end{lemma}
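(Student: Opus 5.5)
The plan is to read off, from Figure~\ref{figure_asy_link_diagram}, how each component $C_i$ sits relative to $K$, and then to translate every surgery into twisting. The basic fact used is the standard one: $-\frac{1}{k}$-surgery on an unknotted circle $C$ bounding a disk $D$ that meets $K$ in several points replaces $K$ by the result of $k$ full right-handed twists on the strands passing through $D$; correspondingly $\frac{1}{k}$-surgery gives $k$ left-handed full twists. (Conventions as in \cite{GS}.) Since $C_0,\dots,C_3$ are pairwise unlinked and bound disjoint disks, the four twistings are independent and may be performed one after another, with the other components simply carried along.

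Concretely I will check, from the figure, which strands of a braid presentation of $K$ pass through each spanning disk.
\begin{itemize}
\item $C_2$ should encircle all strands of a braid region. Then $(-\frac{1}{n-1})$-surgery inserts $(n-1)$ positive full twists, i.e.\ a factor $(1,2,\dots,m+1)^{(m+2)(n-1)}$.
\item $C_3$ encircles all strands as well. Its $(-1)$-surgery supplies one further full twist, so the total power becomes $(m+2)n+\cdots$, matching (\ref{eq:braid}).
\item $C_0$ and $C_1$ encircle two strands each, and carry opposite coefficients $\pm\frac{1}{m}$. I expect them to play the role of the box ``$m$'' in Figure~\ref{fig:K+c}: $m$ full twists on a pair of strands, together with $m$ opposite full twists, should convert a short fixed braid into the $m$-dependent pieces $(1,2,\dots,m+1)^2$ and $(m+1,m,\dots,1)$ on $m+2$ strands.
\end{itemize}
The key identity to use is that $(\sigma_1\cdots\sigma_{k})$ and $(\sigma_k\cdots\sigma_1)$ arise from twisting a band or a pair of parallel strands.

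After the twists I will isotope the diagram so that it is visibly the closure of a braid. The remaining work is word manipulation with the braid relations, conjugation, and centrality of the full twist $\Delta^2$, until the word equals (\ref{eq:braid}). Because the full twist is central, the $C_2$ and $C_3$ contributions can be slid together and merged with the existing $(1,\dots,m+1)^2$.

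The main obstacle is the $C_0,C_1$ step. One must show that the two opposite-handed twistings by $m$ produce exactly the $(m+2)$-strand pieces, uniformly in $m$; this requires identifying $m$ strands which, after twisting, fan out into $m$ parallel strands of the braid. I would first verify the claim for $m=3$ by direct drawing, or by comparing SnapPy's identification of the surgered manifold with $K_{3,1}=\texttt{o9\_42675}$. I would then give the general argument by induction on $m$, showing that increasing $m$ by one inserts one extra strand and adds the letters $\sigma_{m+2}$ to each of the three blocks. Finally, the case $n=1$, where $C_2$ receives $\infty$-surgery and is simply deleted, is consistent with this, so no separate treatment is needed.
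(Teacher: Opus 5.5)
\textbf{The independence assumption fails.} Your reduction rests on the assertion that $C_0,\dots,C_3$ are pairwise unlinked and bound disjoint disks, so that the four twistings are independent. That cannot hold for a fixed link that produces $K_{m,n}$ for every $m$.

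Suppose the disk $D_2$ bounded by $C_2$ were disjoint from the disks $D_0,D_1,D_3$. The twistings along $C_0,C_1,C_3$ are supported near those disks, so they would leave $D_2$ and the points of $K\cap D_2$ untouched. Hence $\mathrm{lk}(K,C_2)$ would be a constant independent of $m$; indeed, $k$ twists along $C_i$ change it by $k\,\mathrm{lk}(K,C_i)\,\mathrm{lk}(C_2,C_i)=0$. Yet your $C_2$-step needs $C_2$ to encircle all $m+2$ strands in order to insert $(1,2,\dots,m+1)^{(m+2)(n-1)}$, that is, $|\mathrm{lk}(K,C_2)|=m+2$ after the other surgeries. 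The same objection applies to your description of $C_3$.

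As the paper's proof indicates, $C_2$ and $C_3$ are in fact linked with $C_0$ and $C_1$. The $-m$ twists along $C_0$ and the $m$ twists along $C_1$ drag $C_2$ and $C_3$ along. They change the coefficients by $-m\,\mathrm{lk}(C_j,C_0)^2$ and $+m\,\mathrm{lk}(C_j,C_1)^2$ respectively. The coefficients $-\frac{1}{n-1}$ and $-1$ survive only because these contributions cancel. Checking this cancellation is the first step of the paper's proof, and your independence claim skips it. For the same reason you cannot slide the $C_2$ and $C_3$ contributions together as central full twists at the outset: $C_2$ becomes a braid axis only after $C_0$, $C_1$ and then $C_3$ have been eliminated and the diagram isotoped.

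\textbf{The decisive step is only announced.} Take your picture, with $C_0$ and $C_1$ each encircling two strands of $K$ and missing $C_2$. Then the twistings can only insert $\sigma_i^{\pm 2m}$ or twisted bands. They cannot change how many times $K$ winds around $C_2$, so they cannot create the $(m+2)$-strand blocks $(1,2,\dots,m+1)^2$ and $(m+1,m,\dots,1)$. Your induction on $m$, which adds one strand and a letter $\sigma_{m+2}$ to each block, has no mechanism behind it. A SnapPy identification at $m=3$ says nothing about general $m$.

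The argument that works follows the paper's order of operations:
\begin{enumerate}
\item Perform $-m$ twists on $C_0$ and $m$ twists on $C_1$, checking the coefficient cancellation, and isotope.
\item Perform the single twist on $C_3$.
\item Isotope $K\cup C_2$ into the closure of the braid (\ref{eq:braid}) with $n=1$, together with its axis $C_2$.
\item Only then apply $n-1$ twists along $C_2$.
\end{enumerate}
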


\begin{proof}
Perform $-m$ twists on $C_0$ and $m$ twists on $C_1$ to erase these components.
The coefficients of $C_2$ and $C_3$ remain unchanged, because the changes in the coefficients caused by $(-m)$ twists on $C_0$ are exactly canceled by those caused by $m$ twists on $C_1$.
As a result, we obtain the link shown on the left of Figure~\ref{figure_asy_deform_1}, which can be deformed into the diagram on the right.

Moreover, performe a $1$ twist on $C_3$ to erase this component.
Then we have the link shown on the left of Figure~\ref{figure_asy_deform_2}.
The resulting link $K\cup C_2$ can be deformed into the union of the closure of an $(m + 2)$-braid and its braid axis. It is easy to see that this braid is equivalent to the one given in (\ref{eq:braid}).
Finally, performing $n-1$ twists on $C_2$ yields the knot $K_{m,n}$. 
\end{proof}

\begin{figure}[ht]
\centering
\includegraphics[scale=0.8]{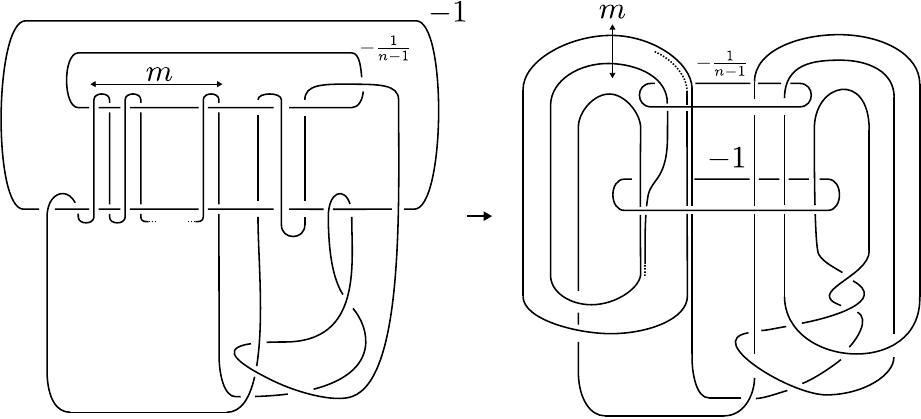}
\caption{The link after performing the twists on $C_0$ and $C_1$ (left), and the deformed link (right).}
\label{figure_asy_deform_1}
\end{figure}

\begin{figure}[ht]
\centering
\includegraphics[scale=0.8]{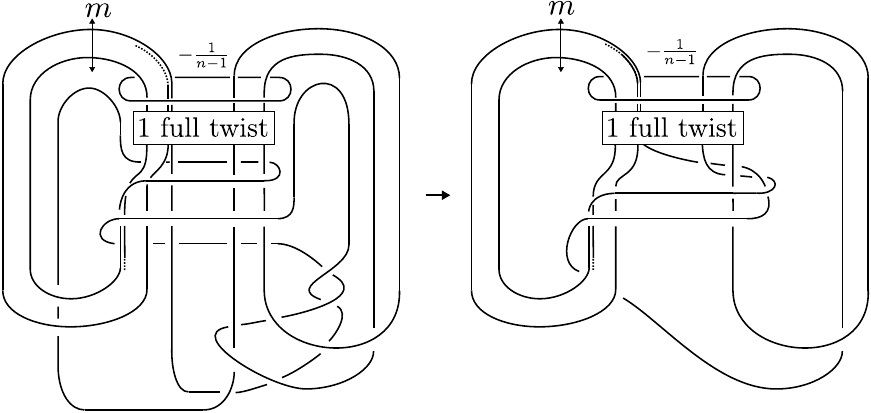}
\caption{The link $K\cup C_2$ (left) and its deformed link (right).}
\label{figure_asy_deform_2}
\end{figure}

To prove the hyperbolicity and asymmetry of $K_{m,n}$, we apply the result of Futer–Purcell–Schleimer \cite{FPS}.
To begin with, we set up the general argument that will be used repeatedly below.

Let $X$ be a cusped hyperbolic $3$-manifold, and suppose that 
\[
\mathrm{sysmin}(L_0)<\mathrm{sys}(X),
\]
where $\mathrm{sys}(X)$ denotes the systole of $X$, namely, the length of its shortest closed geodesic, and $\mathrm{sysmin}(L_0)$ denotes the real number associated to $L_0\ge 10.1$ defined in \cite[Definition 7.25]{FPS}.
Let $\boldsymbol{s}$ be a tuple of slopes on some cusps of $X$ such that the total normalized length $L_X(\boldsymbol{s})$ satisfies
\[
L_X(\boldsymbol{s})\ge L_0.
\]
By \cite[Theorem 7.28]{FPS}, the filled manifold $X(\boldsymbol{s})$ is hyperbolic and the cores of the attached solid tori give the shortest tuple of geodesics in the hyperbolic manifold $X(\boldsymbol{s})$. 
Hence any homeomorphism $\varphi$ on $X(\boldsymbol{s})$ fixes the cores after an isotopy. 
Restricting $\varphi$ to $X$ gives a homeomorphism on $X$.
In particular, if $X$ is asymmetric, then $X(\boldsymbol{s})$ is asymmetric.

\begin{lemma}\label{lem_asy_1}
If $m\ge 20$ and $n\ge 21$, then $K_{m,n}$ is hyperbolic and asymmetric.
\end{lemma}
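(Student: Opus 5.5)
We apply the general argument set up just above with $X=S^3\setminus (K\cup C_0\cup C_1\cup C_2\cup C_3)$, the exterior of the link $L$ of Figure~\ref{figure_asy_link_diagram}, and with $\boldsymbol{s}$ the tuple of slopes $\frac1m$, $-\frac1m$, $-\frac1{n-1}$ on $C_0$, $C_1$, $C_2$. We leave the $(-1)$-filling on $C_3$ aside for now. The reason is that a slope $-1$ has bounded normalized length, so $C_3$ cannot contribute to making $L_X(\boldsymbol{s})$ large. Instead we fill $C_3$ first.

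So let $X=(S^3\setminus L)(C_3:-1)$. This is the exterior of a four-component link $K\cup C_0\cup C_1\cup C_2$ in $S^3$, since $(-1)$-surgery on the unknot $C_3$ returns $S^3$. The first step is a SnapPy computation on $X$, rigorously verified (for instance with \texttt{verify\_hyperbolicity}). It should establish three things:
\begin{itemize}
\item[(1)] $X$ is hyperbolic;
\item[(2)] $X$ is asymmetric, i.e.\ its isometry group, computed from the canonical triangulation, is trivial (verified via \texttt{isometry\_signature} / \texttt{symmetry\_group});
\item[(3)] the systole satisfies $\mathrm{sys}(X)>\mathrm{sysmin}(L_0)$ for a suitable $L_0\ge 10.1$, using the verified length spectrum.
\end{itemize}
Since asymmetry of $X$ must also rule out isometries permuting cusps, we check that the symmetry group is trivial as a group, not just cusp-preserving.

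The second step is to bound the normalized lengths. Read off from SnapPy the cusp shapes of $X$, with interval arithmetic, in a maximal cusp neighbourhood (or disjoint embedded horocusps). Compute the lengths of the meridian $\mu_i$ and longitude $\lambda_i$ on each cusp $C_0,C_1,C_2$. For the slope $p\mu+q\lambda$, the normalized length $\ell/\sqrt{\mathrm{area}}$ grows roughly linearly in $|q|$ for slopes $\pm\frac1m$ and $-\frac1{n-1}$, as $|q|\,|\lambda_i|$ up to a bounded error coming from the meridian. This gives explicit lower bounds of the form $c_i m$ for $C_0,C_1$ and $c_2(n-1)$ for $C_2$. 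Recall that $L_X(\boldsymbol{s})^{-2}=\sum_i L(s_i)^{-2}$. We then check that $m\ge 20$ and $n\ge 21$ force $L_X(\boldsymbol{s})\ge L_0$. The thresholds $20$ and $21$ are presumably exactly what this estimate yields. By the preceding argument, $X(\boldsymbol{s})=S^3\setminus K_{m,n}$ is then hyperbolic, the cores are the shortest geodesics, and every symmetry restricts to a symmetry of $X$. Since $X$ is asymmetric, $K_{m,n}$ is asymmetric.

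The main obstacle is the third item of the first step together with the cusp estimates. We need a certified systole of $X$ exceeding $\mathrm{sysmin}(L_0)$. We also need cusp shapes good enough that the resulting constant $L_0$ is attainable at $m=20$, $n=21$. If the systole of $X$ turns out too small for the $L_0$ required, we would try an alternative: keep $C_3$ unfilled and include it in the tuple with the $(-1)$ slope, since total normalized length adds harmonically and the large slopes may still dominate. Failing that, we would vary which components are filled in advance, picking whichever base manifold has systole and cusp geometry making the inequalities check out.
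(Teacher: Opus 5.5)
Your plan is the paper's proof. The paper also fills $C_3$ with slope $-1$ to get a four-cusped manifold $M$, which it identifies with the complement of \texttt{L14n61770} (hyperbolic and asymmetric). It certifies $\mathrm{sys}(M)>1$ and uses $\mathrm{sysmin}(10.1)<2\pi/(10.1^2-58)\approx 0.143$ from \cite[Lemma 7.26]{FPS}. It then checks the total normalized length of $(\frac1m,-\frac1m,-\frac1{n-1})$ from the verified cusp shapes $z_0,z_1,z_2$.

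One point in your second step would fail as stated, because the margin is thin. At $(m,n)=(20,21)$ the total normalized length is only about $10.11$.

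\begin{itemize}
\item Since $\mathrm{Re}(z_2)<0$, the ratio $|(n-1)z_2-1|/(n-1)$ decreases to $|z_2|$ as $n\to\infty$.
\item Hence a bound $L_M(s_2)\ge c_2(n-1)$ valid for all $n\ge 21$ forces $c_2\le |z_2|/\sqrt{\mathrm{Im}(z_2)}$.
\item With that constant, the total at $(20,21)$ drops to about $9.95<10.1$.
\item A triangle-inequality bound with a ``bounded meridian error'' is worse still, about $9.4$.
\end{itemize}

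So linear lower bounds do not yield the thresholds $20$ and $21$. Your guess that they are ``exactly what this estimate yields'' is wrong for the $C_2$ cusp. The paper instead uses the exact expressions $|mz_0+1|/\sqrt{\mathrm{Im}(z_0)}$, $|mz_1-1|/\sqrt{\mathrm{Im}(z_1)}$ and $|(n-1)z_2-1|/\sqrt{\mathrm{Im}(z_2)}$. It observes that they are increasing in $m$ and $n$, and evaluates them at the corner $(20,21)$.

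Your fallback reasoning is also backwards. Since $L^{-2}=\sum_i L(s_i)^{-2}$, the total is at most the smallest individual normalized length. Keeping $C_3$ with slope $-1$ in the tuple would therefore cap the total rather than be dominated by the large slopes, as you yourself observed at the start. Fortunately no fallback is needed, since $\mathrm{sys}(M)>1$ already exceeds $\mathrm{sysmin}(10.1)$.
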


\begin{proof}
Let $M$ be the cusped manifold obtained by filling the complement of the link $L=K \cup C_0 \cup C_1 \cup C_2 \cup C_3$ along $C_3$ with coefficient $-1$.
We note that $M$ is hyperbolic and asymmetric.
(In fact, $M$ is homeomorphic to the complement of \texttt{L14n61770} in the census.)
Moreover, SnapPy verifies that $\mathrm{sys}(M)>1$.
Set $L_0=10.1$. As shown in \cite[Lemma 7.26]{FPS},
\[
\mathrm{sysmin}(L_0)<\frac{2\pi}{L_0^2-58}=0.142\cdots<\mathrm{sys}(M).
\]
Let $T_0$, $T_1$ and $T_2$ be the cusps corresponding to $C_0$, $C_1$ and $C_2$. 
By the above argument, if the total normalized length $L_M(\boldsymbol{s}) \ge 10.1$, then the filled manifold $M(\boldsymbol{s})=(K\cup C_0\cup C_1 \cup C_2)(*,\frac{1}{m},-\frac{1}{m},-\frac{1}{n-1})$, 
which is the complement of $K_{m,n}$, 
is hyperbolic and asymmetric.
Hence we can conclude that $K_{m,n}$ is asymmetric.

Let $\boldsymbol{s}=(s_0,s_1,s_2)=(\frac{1}{m},-\frac{1}{m},-\frac{1}{n-1})$.
The normalized length of $s_i$ is given as 
\[
L_M(s_i)=\frac{\mathrm{length}(s_i)}{\sqrt{\mathrm{area}(T_i)}},
\]
where $\mathrm{length}(s_i)$ is the length of a geodesic representative of $s_i$ on the cusp $T_i$.
Recall that the total normalized length $L_M(\boldsymbol{s})$ is defined by the formula
\[
\frac{1}{L_M(\boldsymbol{s})^2}=\frac{1}{L_M(s_0)^2}+\frac{1}{L_M(s_1)^2}+\frac{1}{L_M(s_2)^2}.
\]

The cusp shapes of $T_0$, $T_1$ and $T_2$ are given by 
\begin{align*}
z_0&=-0.375710603555430? + 0.594247011057927?*i,\\
z_1&=0.294509307916497? + 0.432091899249841?*i,\\
z_2&=-0.473512401909394? + 0.334485477737059?*i,
\end{align*}
using the standard meridian-longitude basis.
(These are given by verified computations in SnapPy.
The question mark indicates that the preceding digit is possibly wrong by $\pm1$.)
This means that the parallelogram in $\mathbb{C}$ with vertices $0$, $1$, $z_i$ and $z_i+1$ represents the similarity class of the cusp $T_i$ where the meridian corresponds to the edge from $0$ to $1$ and the longitude corresponds to the edge from $0$ to $z_i$.

Thus, with these shapes, the slope $s_0=\frac{1}{m}$ on $T_0$ has length $|mz_0+1|$ and normalized length $L_M(s_0)=\frac{|mz_0+1|}{\sqrt{|\mathrm{Im}(z_0)|}}$.
Similarly, the slope $s_1=-\frac{1}{m}$ on $T_1$ has normalized length $L_M(s_1)=\frac{|mz_1-1|}{\sqrt{|\mathrm{Im}(z_1)|}}$,
and the slope $s_2=-\frac{1}{n-1}$ on $T_2$ has normalized length $L_M(s_2)=\frac{|(n-1)z_2-1|}{\sqrt{\mathrm{Im}(z_2)}}$. 
It follows that the total normalized length $L_M(\boldsymbol{s})$ is monotonically increasing in both $m$ and $n$. 
Therefore, it suffices to consider the case $m=20$ and $n=21$.
By direct computation, we have $L_M(\boldsymbol{s})\ge 10.1$ when $m=20,n=21$.
This proves the claim.
\end{proof}

\begin{lemma}\label{lem_asy_2}
If $3\le m \le19$ and $n\ge 1$,  then $K_{m,n}$ is hyperbolic and asymmetric.
\end{lemma}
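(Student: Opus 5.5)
For each fixed $m$ with $3\le m\le 19$, we repeat the argument of Lemma \ref{lem_asy_1}, but with a smaller parent manifold: the $C_0$ and $C_1$ surgeries are done first, and only the $C_2$ filling is treated by Futer--Purcell--Schleimer. Let $M_m$ be the complement of $K\cup C_2$ after $-1$-surgery on $C_3$, $\frac1m$-surgery on $C_0$ and $-\frac1m$-surgery on $C_1$. By the twisting argument in the proof of the preceding lemma, $M_m$ is the complement of $K\cup C_2$, where $K$ is the closure of the $(m+2)$-braid $[(1,\dots,m+1)^{2},(m+1,\dots,1),1,2,1,1]$ and $C_2$ is its braid axis. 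Then $-\frac{1}{n-1}$-filling on the $C_2$ cusp gives the complement of $K_{m,n}$.

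For each of these seventeen values of $m$, we use SnapPy with verified computation (\texttt{verify\_hyperbolicity}, the isometry signature or \texttt{symmetry\_group} on a verified canonical triangulation, and a rigorous systole bound) to check four things. First, $M_m$ is hyperbolic. Second, $M_m$ is asymmetric. Third, $\mathrm{sys}(M_m)>0.142$, so that $\mathrm{sysmin}(10.1)<\mathrm{sys}(M_m)$ by \cite[Lemma 7.26]{FPS}. Fourth, we obtain the cusp shape $z^{(m)}$ of the $C_2$ cusp in the meridian--longitude basis. The normalized length of the slope $-\frac1{n-1}$ is then $|(n-1)z^{(m)}-1|/\sqrt{\mathrm{Im}\, z^{(m)}}$. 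This grows roughly linearly in $n$, so there is a threshold $N_m$ beyond which it is at least $10.1$. Only one cusp is filled, so the total normalized length equals this single normalized length. The general argument stated before Lemma \ref{lem_asy_1} then shows that $K_{m,n}$ is hyperbolic and asymmetric for all $n\ge N_m$. We expect $N_m$ to be roughly $10.1\sqrt{\mathrm{Im}\,z^{(m)}}/|z^{(m)}|+1$, which should be a few dozen at most.

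The remaining finitely many pairs $(m,n)$ with $3\le m\le 19$ and $1\le n<N_m$ are checked directly. For each, we build $K_{m,n}$ in SnapPy from the braid word (\ref{eq:braid}), certify a hyperbolic structure with interval arithmetic, and compute the symmetry group from the canonical triangulation, verifying it is trivial. As a cross-check, $K_{3,1}$ should be recognized as \texttt{o9\_42675}, matching \cite{ABG}.

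We expect the main obstacle to be the first step: asymmetry of the parent manifold $M_m$ is not guaranteed for every $m$. The braid axis setup might carry a symmetry, for instance a strong inversion that extends over $C_2$. If some $M_m$ turns out to be symmetric, we would first try keeping $C_3$ (or $C_0,C_1$) unfilled as an extra cusp. That enlarges the parent to an asymmetric manifold, such as the \texttt{L14n61770} complement filled appropriately, at the cost of handling two or three cusps in the total normalized length sum, as in Lemma \ref{lem_asy_1}. The other practical concern is that the finite check for small $n$ becomes computationally heavy for large $m$, since the crossing number grows like $m^2 n$. Keeping $N_m$ small is the reason to use the one-cusp parent manifold $M_m$ rather than the full link.
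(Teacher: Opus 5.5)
Your outline matches the paper's. You fill $C_3$, $C_0$, $C_1$ first to get the parent $N_m$ (your $M_m$) and check with SnapPy that it is hyperbolic and asymmetric. You then apply \cite[Theorem 7.28]{FPS} to the single $C_2$ filling for large $n$, and check the remaining pairs directly.

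\textbf{The gap.} The step that fails is your third check, $\mathrm{sys}(M_m)>0.142$, that is, using the fixed constant $L_0=10.1$ for every $m$. As $m$ grows, the cores of the $\frac1m$ and $-\frac1m$ fillings on $C_0$ and $C_1$ become short closed geodesics in $N_m$, so the systole drops. The paper's verified values are:
\begin{itemize}
\item $\mathrm{sys}(N_8)\approx 0.1435$, barely above $2\pi/(10.1^2-58)\approx 0.1428$;
\item $\mathrm{sys}(N_9)\approx 0.111$;
\item $\mathrm{sys}(N_{19})\approx 0.023$.
\end{itemize}
So for $m=9,\dots,19$ you cannot certify $\mathrm{sysmin}(10.1)<\mathrm{sys}(N_m)$ via \cite[Lemma 7.26]{FPS}, and the argument stalls there. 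Your contingency plan targets a failure of asymmetry, which does not occur. Enlarging the parent by leaving $C_0,C_1$ unfilled would not help for small $m$ either: the short slopes $\pm\frac1m$ then enter the total normalized length and keep it far below $10.1$. For instance, $\frac13$ on $T_0$ has normalized length about $2.3$.

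\textbf{The fix.} This is what the paper does: let the length threshold depend on $m$. Since \cite[Theorem 7.28]{FPS} applies for any $L_0\ge 10.1$:
\begin{itemize}
\item choose $L_m\ge 10.1$ with $2\pi/(L_m^2-58)<\mathrm{sys}(N_m)$, so that $\mathrm{sysmin}(L_m)<\mathrm{sys}(N_m)$;
\item take $n_m$ so that the normalized length of the $C_2$ slope is at least $L_m$ for all $n\ge n_m$.
\end{itemize}
This gives $L_m$ up to $18.4$ and thresholds $n_m\le 18$. The finite SnapPy check then proceeds as you describe.

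\textbf{A bookkeeping slip.} After twisting away $C_0$, $C_1$, $C_3$, the link $K\cup C_2$ is $K_{m,1}$ with its braid axis, not $K_m=K_{m,0}$ with its axis. The complement is the same up to homeomorphism. However, with your braid the slope producing $K_{m,n}$ is $-\frac1n$, not $-\frac1{n-1}$, and the cusp shape and the slope must be read in the same framing.
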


\begin{proof}
Let $M$ be the manifold in the proof of Lemma~\ref{lem_asy_1}. 
For $m\in \{3,\dots,19\}$, let $N_m$ be the cusped manifold obtained by filling $M$ along $C_0$ and $C_1$ with coefficients $\frac{1}{m}$ and $-\frac{1}{m}$, respectively.
SnayPy verifies that $N_m$ is hyperbolic and asymmetric.

For each $m\in \{3,\dots,19\}$, we compute $\mathrm{sys}(N_m)$ using SnapPy and choose $L_m\geq 10.1$ such that
\[
\frac{2\pi}{L_m^2-58}<\operatorname{sys}(N_m).
\]
See Table \ref{table:sys}.

\begin{table}[ht]
\caption{$\mathrm{sys}(N_m)$, $L_m$ and $n_m$.}\label{table:sys}
\small
\begin{tabular}{| c | c | c | c || c| c| c| c|}
\hline
$m$  & $\mathrm{sys}(N_m)$     & $L_m$ &  $n_m$ & $m$ & $\mathrm{sys}(N_m)$     & $L_m$ &  $n_m$ \\
\hline
3  & 1.04843826338309   & 10.1 & 10 & 12 & 0.0596468410571235 & 12.8 & 13 \\
4  & 0.712479392039562  &  10.1& 10 & 13 & 0.0503131143290759 & 13.6 & 14  \\
5  & 0.417394449183117  &  10.1& 10 & 14 & 0.0430103201986191 &  14.3&  14\\
6  & 0.273574595743006  &  10.1 &  10 &15 & 0.0371892471952512 &  15.1&  15 \\
7  & 0.193060648170864  &  10.1 & 10  & 16 & 0.0324745469728705 &  15.9& 16\\
8  & 0.143505095158023  &  10.1& 10 &  17 & 0.0286026257272713 &  16.7&  17\\
9  & 0.110849671955907  &   10.8& 11 & 18 & 0.0253839643728564 & 17.5 & 18\\
10 & 0.0881968649176810 &  11.4&  11 & 19 & 0.0226794709765155 &  18.4 &18 \\
11 & 0.0718410577723763 &  12.1& 12 & & &  &  \\
\hline
\end{tabular}
\end{table}

Then, $\mathrm{sysmin}(L_m)<\mathrm{sys}(N_m)$ \cite[Lemma 7.26]{FPS}.
We  find an integer $n_m$ such that the normalized length of the slope $-\frac{1}{n-1}$ on $T_2$ satisfies
\[
L_{N_m}\left(-\frac{1}{n-1}\right)\geq L_m
\]
for all $n\geq n_m$. Then, the filled manifold $N_m(-\frac{1}{n-1})$, which is the complement of $K_{m,n}$, is hyperbolic and asymmetric for all $n\geq n_m$.

The finitely many remaining cases are verified individually by SnapPy.
\end{proof}

\begin{lemma}\label{lem_asy_3}
If $m \ge 20$ and $1\le n\le 20$, then $K_{m,n}$ is hyperbolic and asymmetric.
\end{lemma}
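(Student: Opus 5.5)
This case mirrors Lemma~\ref{lem_asy_2} with the roles of $m$ and $n$ swapped. Now $n$ is confined to finitely many values and $m$ is large.

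For each $n\in\{1,\dots,20\}$, let $P_n$ be the cusped manifold obtained from $M$ (the filling of the complement of $L$ along $C_3$ with coefficient $-1$, homeomorphic to the complement of \texttt{L14n61770}) by filling $C_2$ with coefficient $-\frac{1}{n-1}$. For $n=1$ this coefficient is $\infty$, so $C_2$ is simply deleted. Each $P_n$ is then a three-cusped manifold with cusps $K$, $C_0$, $C_1$.

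\emph{Step 1.} Verify with SnapPy that each $P_n$ is hyperbolic and asymmetric. The asymmetry check is a verified computation of the isometry group; alternatively, canonical triangulations can be compared to certify that the symmetry group is trivial.

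\emph{Step 2.} For each $n$, compute a rigorous lower bound for $\mathrm{sys}(P_n)$. Then choose $L'_n\ge 10.1$ with
\[
\frac{2\pi}{(L'_n)^2-58}<\mathrm{sys}(P_n),
\]
so that \cite[Lemma 7.26]{FPS} gives $\mathrm{sysmin}(L'_n)<\mathrm{sys}(P_n)$.

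\emph{Step 3.} Compute the verified cusp shapes $w_0^{(n)},w_1^{(n)}$ of the cusps of $C_0$ and $C_1$ in $P_n$. The slope pair $(\frac1m,-\frac1m)$ has normalized lengths
\[
\frac{|mw_0^{(n)}+1|}{\sqrt{\mathrm{Im}\,w_0^{(n)}}}
\quad\text{and}\quad
\frac{|mw_1^{(n)}-1|}{\sqrt{\mathrm{Im}\,w_1^{(n)}}}.
\]
For $m$ beyond a small threshold (when $|w|$ is bounded below), each of these is increasing in $m$, and hence so is the total normalized length. Find an integer $m_n$ for which the total normalized length is at least $L'_n$ for all $m\ge m_n$. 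The general argument preceding Lemma~\ref{lem_asy_1} then shows that $P_n(\frac1m,-\frac1m)$, the complement of $K_{m,n}$, is hyperbolic and asymmetric for all $m\ge m_n$.

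\emph{Step 4.} If $m_n\le 20$ for every $n\le 20$, we are done. Otherwise, the finitely many pairs with $20\le m<m_n$ are checked individually in SnapPy by proving hyperbolicity and computing trivial symmetry groups of the knot complements.

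The main obstacle is Step 2 when $n$ is large. As $n$ grows, the filled core of $C_2$ becomes a very short geodesic in $P_n$: its length is roughly of order $1/n^2$, as the decreasing systoles in Table~\ref{table:sys} suggest. The required $L'_n$ therefore grows, which pushes $m_n$ up, possibly well past $20$ and enlarging the finite check in Step 4.

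If this becomes unwieldy, I would first try filling all three cusps at once from $M$. Since $n\le 20$ is bounded but $L_M(s_2)$ is not small for moderate $n$, this may not suffice either. The fallback is to treat the short geodesic as an extra cusp: keep $C_2$ unfilled and work in $M$ itself. There the total normalized length for $(\frac1m,-\frac1m,-\frac1{n-1})$ is dominated by the $C_2$ term, so this fails for small $n$. Hence I would use the per-$n$ approach above and accept a larger, but still finite, SnapPy verification.
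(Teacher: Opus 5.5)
Your overall plan is the paper's own. You fill $M$ along $C_2$ first to get three-cusped manifolds (your $P_n$, the paper's $J_n$) and bound their systoles. You then use the cusp shapes of $C_0,C_1$ to find a threshold $m_n$ beyond which the total normalized length of $(\frac1m,-\frac1m)$ exceeds the required $L_n$. The finitely many pairs with $20\le m<m_n$ are checked in SnapPy; the paper's thresholds do grow with $n$, up to $m_{20}=38$, as you anticipated.

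The gap is in Step 1 at $n=1$. There $P_1$ is just $M$ with $C_2$ deleted, homeomorphic to the complement of \texttt{L12n1625}, and it is \emph{not} asymmetric. It has a nontrivial involution exchanging the cusp of $K$ with the cusp of $C_1$. So the SnapPy asymmetry check you propose fails for $n=1$. The general argument preceding Lemma~\ref{lem_asy_1}, which needs the unfilled manifold $X$ to be asymmetric, then does not apply as stated.

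The missing idea is a refinement of that argument. Take $m$ large enough that the hypothesis of \cite[Theorem 7.28]{FPS} holds; the paper's threshold for $n=1$ is $m_1=20$, so no extra finite check is needed there. Then any self-homeomorphism of $P_1(\frac1m,-\frac1m)$ is isotopic to one preserving the union of the two filling cores. Its restriction to $P_1$ therefore sends the unique unfilled cusp, that of $K$, to itself. The only nontrivial symmetry of $P_1$ moves that cusp to the $C_1$ cusp, so the restriction is isotopic to the identity, and the complement of $K_{m,1}$ is still asymmetric. With this extra observation for $n=1$, and SnapPy's verification that $P_n$ is asymmetric for $2\le n\le 20$, your argument goes through.
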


\begin{proof}
Let $M$ be the manifold in the proof of Lemma~\ref{lem_asy_1}. 
For $n\in \{1,\dots,20\}$, let $J_n$ be the cusped manifold obtained by filling $M$ along $C_2$ with coefficient $-\frac{1}{n-1}$.
The rest of the argument is the same as in the proof of Lemma~\ref{lem_asy_2}. 

For each $n\in \{1,\dots,20\}$,  compute $\mathrm{sys}(J_n)$ and choose $L_n\ge 10.1$ such
that 
\[
\frac{2\pi}{L_n^2-58}<\mathrm{sys}(J_n).
\]
As before, we can find an integer $m_n$ such that
the total normalized length of the pair of slopes $\boldsymbol{s}=(\frac{1}{m},-\frac{1}{m})$ on $(T_0,T_1)$
satisfies 
\[
L_{J_n}(\boldsymbol{s})\ge L_n
\]
for all $m \ge m_n$.
See Table \ref{table:sys2}.
Then the rest of the argument goes as before.

\begin{table}[ht]
\caption{$\mathrm{sys}(J_n)$, $L_n$ and $m_n$.}\label{table:sys2}
\small
\begin{tabular}{| c | c | c | c || c| c| c| c|}
\hline
$n$  & $\mathrm{sys}(J_n)$     & $L_n$ &  $m_n$ & $n$ & $\mathrm{sys}(J_n)$     & $L_n$ &  $m_n$ \\
\hline
1  &  1.033321292811034  &10.1  & 20 & 11 & 0.0476049980334768 & 13.8 & 24 \\
2  &  0.909476091800048  & 10.1  & 19 & 12 & 0.0402982084024402 & 14.7 & 26  \\
3  &  0.490006165572169 &  10.1 & 18 & 13 & 0.0345499062187332 & 15.5 &  27\\
4  &  0.303412013129525 &  10.1 &  18 &14 & 0.0299471832695823 &  16.4 &  29 \\
5  &   0.205421443632841 & 10.1  & 18  & 15 & 0.0262051401966472&  17.3 & 30\\
6  &  0.147953649888420 &  10.1 & 18 &  16 & 0.0231221195351999 & 18.2  & 32\\
7  &  0.111495301756357 &  10.7  & 19 & 17 & 0.0205521723582647&  19.1 & 33\\
8 & 0.0869665685961345 & 11.5  &  21 & 18 & 0.0183875811797308 &  20.0  &35 \\
9 & 0.0696958284585769 &  12.2 & 22 & 19&0.0165474544756519 & 21.0 & 36 \\
10 &0.0570866686759814 &  13.0 & 23 & 20 & 0.0149700985970390& 21.9 &  38\\
\hline
\end{tabular}
\end{table}


Remark that the manifold $J_1$ is not asymmetric.
It is homeomorphic to the complement of \texttt{L12n1625}.
However, its only non-trivial symmetry is an involution that exchanges the cusps corresponding to $K$ and $C_1$.
Therefore, for $m$ satisfying the hypothesis of \cite[Theorem 7.28]{FPS}, any homeomorphism of the filled manifold $J_1(\frac{1}{m}, -\frac{1}{m})$ must restrict to a homeomorphism of $J_1$ that is isotopic to the identity. 
Thus, the result for $n=1$ follows in the same manner as for the other cases.
\end{proof}

\begin{proof}[Proof of Theorem \ref{thm:main}]
By Theorem \ref{thm:Kmn}, $K_{m,n}$ is an L–space knot. Lemmas~\ref{lem_asy_1},~\ref{lem_asy_2} and~\ref{lem_asy_3} show that $K_{m,n}$ is hyperbolic and asymmetric if $m \ge 3$ and $n\ge 1$.
\end{proof}



\bibliographystyle{amsplain}

\end{document}